\newtheorem{thm}{Theorem}[section]
\newtheorem{exam}[thm]{Example}
\newtheorem{defn}{Definition}[section]
\newtheorem{rem}{Remark}[section]
\begin{document}{}
\baselineskip	9mm
\vspace*{8mm}
\begin{center}{
\Large\bf	The Lumer-Phillips Theorem For Two--parameter $C_0$--semigroups}\\
\vspace{4mm}
{\bf	Rasoul Abazari\footnote{Corresponding Author E-mail:r.abazari@iauardabil.ac.ir,\\ rasoolabazari@gmail.com}, Assadollah Niknam, Mahmoud Hassani}\\
\vspace{4mm}{\normalsize{\em \small Department of
Mathematics, Faculty of Sciences, Mashhad Branch, Islamic Azad University, Mashhad, Iran.}\\
\vspace*{2mm}
}
\end{center}
\begin{quotation}
\vspace{4mm}	\small
{\noindent\bf	Abstract:}
In this paper we extend the Lumer-Phillips theorem to the context of two--parameter $C_0$--semigroup of contractions. That is, we characterize
the infinitesimal generators of two--parameter $C_0$--semigroups of contractions. Conditions on the behavior of the resolvent of operators, which are necessary and sufficient for the pair of operators to be the infinitesimal generator of a $C_0$--semigroup of contractions are given.
\vspace*{2mm}\\
{\noindent\bf	Keywords:}Lumer-Phillips Theorem,Two--parameter $C_0$--semigroup, Dissipative operator.
\end{quotation}
{\normalsize{
\section{Preliminaries}
The semigroups of operators have several application in areas of applied mathematics such as
prediction theory and random fields. This theory is useful to describe the time evolution of physical system
in quantum field theory, statistical mechanic and partial differential equations\cite{Van.Castern}, \cite{A. Niknam}.

In this section, we state some definitions and theorems as preliminaries to describe the main results.
We start by state the definition of two--parameter semigroups.

\begin{defn}
Let $X$ be a Banach space. By a two--parameter semigroup of operators we mean a function
$T:\mathbb{R}_+\times\mathbb{R}_+\longrightarrow B(X)$ with the following properties;

i) $T(0, 0)=I$

ii) $T(s+s', t+t')=T(s, t)T(s', t')$\\
If $(s,t)\longrightarrow T(s,t)x$ is continuous for all $x\in X$, then it is called strongly continuous and
if $(s,t)\longrightarrow T(s,t)x$ is norm continuous, then it is called uniformly continuous.
\end{defn}
A strongly continuous semigroup of bounded linear operators on $X$ will be called a semigroup of
class $C_0$ or simply $C_0$--semigroup.

Let $T(s,t)$ be any two--parameter semigroup, if we consider $u(s)$ and $v(t)$ as below,
$$u(s)=T(s,0) \ \ \ , \ \ \ \ v(t)=T(0,t)$$
then the semigroup property of $T$ implies that $T(s,t)=u(s)v(t)$ and $T(s,t)$ is strongly (resp. uniformly)
continuous if and only if $u(s)$ and $v(t)$ are strongly (resp. uniformly) continuous as one--parameter semigroup.

If $A_1$ and $A_2$ are infinitesimal generators of $u(s)$ and $v(t)$ respectively,
then we will thinks of the pair $(A_1, A_2)$ as infinitesimal generator of $T(s,t)$. For more details on the such generators, see \cite{Abazari}

\begin{thm}
\cite{A.Pazy}, Let $T(t)$ be $C_0$--semigroup, there exist constants $\omega\geq0$ and $M\geq1$ such that
$$\|T(t)\|\leq Me^{\omega t}, \ \ \ \ \text{for}\  \ 0\leq t<\infty.$$
\end{thm}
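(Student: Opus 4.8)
The plan is to prove the estimate in two stages: first establish that $\|T(t)\|$ stays bounded on a small interval near the origin, and then bootstrap this local bound into a global exponential bound using the semigroup law.

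For the first stage, I would show there is a constant $M \geq 1$ with $\|T(t)\| \leq M$ for all $t \in [0,1]$. The natural tool is the uniform boundedness principle. Strong continuity of the $C_0$--semigroup means that for each fixed $x \in X$ the orbit $t \mapsto T(t)x$ is continuous, hence bounded on the compact interval $[0,1]$; in particular $\sup_{t \in [0,1]} \|T(t)x\| < \infty$ for every $x$. Banach--Steinhaus then upgrades this pointwise bound to a uniform operator bound $M := \sup_{t \in [0,1]} \|T(t)\| < \infty$. Since $T(0) = I$ forces $\|T(0)\| = 1$, we automatically have $M \geq 1$. Equivalently, one can argue by contradiction: if no such local bound existed there would be a sequence $t_n \downarrow 0$ with $\|T(t_n)\| \to \infty$, and the contrapositive of the uniform boundedness principle would produce an $x$ for which $\|T(t_n)x\|$ is unbounded, contradicting $T(t_n)x \to T(0)x = x$.

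For the second stage, I would exploit the semigroup identity $T(s+s') = T(s)T(s')$. Given any $t \geq 0$, write $t = n + r$ with $n = \lfloor t \rfloor \in \mathbb{Z}_{\geq 0}$ and $r \in [0,1)$. Applying the semigroup property $n$ times gives $T(t) = T(1)^n T(r)$, whence $\|T(t)\| \leq \|T(1)\|^n \|T(r)\| \leq M^n \cdot M = M^{n+1}$. Setting $\omega := \log M \geq 0$ and using $n \leq t$ together with $\omega \geq 0$ yields $M^{n+1} = M e^{n\omega} \leq M e^{\omega t}$, which is precisely the claimed bound.

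I expect the only genuine obstacle to be the first stage, that is, establishing local boundedness, since it is the one place where strong continuity is indispensable and where the uniform boundedness principle must be invoked. The second stage is then a routine application of the semigroup law and the arithmetic of exponentials, requiring no further structural input.
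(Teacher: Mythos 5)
Your proof is correct, but note that the paper itself offers no proof of this statement: it is quoted from Pazy \cite{A.Pazy}, so the only comparison available is with that classical argument, which yours essentially reproduces (local boundedness via Banach--Steinhaus, then the semigroup law to bootstrap to a global exponential estimate). Two points of divergence are worth flagging. First, Pazy's definition of a $C_0$--semigroup demands strong continuity only at $t=0$ (i.e.\ $T(t)x\to x$ as $t\downarrow 0$), so in his setting one cannot assert that orbits are continuous, hence bounded, on $[0,1]$; he instead proves boundedness of $\|T(t)\|$ on some small interval $[0,\eta]$ by contradiction and deduces continuity of orbits everywhere only afterwards. Your direct compactness argument is nevertheless legitimate in the present paper, because its Definition 1.1 takes strong continuity to mean continuity of $(s,t)\mapsto T(s,t)x$ everywhere, which in one parameter gives continuity of $t\mapsto T(t)x$ on all of $[0,\infty)$. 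Second, your ``equivalently'' aside is not quite right as stated: if $\|T(t)\|$ were unbounded on $[0,1]$, you would obtain a sequence $t_n\in[0,1]$ with $\|T(t_n)\|\to\infty$, but nothing forces $t_n\downarrow 0$; to repair it, either pass to a convergent subsequence $t_n\to t_0\in[0,1]$ and contradict strong continuity at $t_0$, or, as Pazy does, assume unboundedness on every interval $[0,\eta]$ and extract a sequence tending to $0$. Since your main argument does not rely on this aside, the proof stands as written.
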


Let $T(s,t)$ be a $C_0$--semigroup,
Since $T(s,t)=u(s)v(t)$ and $u(s), v(t)$ are $C_0$--semigroup in the manner of one--parameter,
then by the previous theorem there exist constants $\omega_1, \omega_2\geq0$ and $M_1, M_2\geq1$such that
$$\|u(s)\|\leq M_1e^{\omega_1s},$$
$$\|v(t)\|\leq M_2e^{\omega_2t}.$$
Let $M=M_1M_2$, then we have
\begin{eqnarray*}
\begin{split}
\|T(s,t)\|&=\|u(s)v(t)\|\leq\|u(s)\|\|v(t)\|\\
&\leq M_1M_2e^{\omega_1s}e^{\omega_2t}=Me^{\omega_1s+\omega_2t}.
\end{split}
\end{eqnarray*}

\begin{defn}
If $\omega_1=\omega_2=0,$ then $T(s,t)$ is called uniformly bounded and if
moreover $M=1$ it is called semigroup of contractions.
\end{defn}

Recall that if $A$ is a linear, not necessarily bounded operator in Banach space $X$, the resolvent
set $\rho(A)$ of $A$ is the set of all complex numbers $\lambda$ for which $\lambda I-A$ is invertible
i.e, $(\lambda I-A)^{-1}$ is a bounded linear operator in $X$. The family $R(\lambda, A)=(\lambda I-A)^{-1}$,
 $\lambda\in \rho(A)$ of bounded linear operators is called the resolvent of $A$.\\
Let $X$ be a Banach space and $X^*$ be its dual. $<x^*, x>$ or $<x, x^*>$ denotes the value of
$x^*\in X^*$ at $x\in X$. For every $x\in X$ we define the duality set $F(x)\subset X^*$ by
$$F(x)=\{x^* : \ \ x^*\in X^* \ \ and \ \ <x^*,x>=\|x\|^2=\|x^*\|^2\}.$$
From the Hahn-Banach theorem it follows that $F(x)\neq\phi$ for every $x\in X.$

A linear operator $A$ is dissipative if for every $x\in D(A)$, the domain of $A$, there is a $x^*\in F(x)$
such that $Re<Ax, x^*>\leq0.$


\section{Main Results}

We state first the following useful theorems which can be found for example in \cite{A.Pazy}, \cite{Engel}.
\begin{thm}
A linear operator $A$ is dissipative if and only if,
$$\|(\lambda I-A)x\|\geq\lambda\|x\|\  \  \text{for all}\  \  x\in D(A) \ \ \text{and}\  \  \lambda>0.$$

\end{thm}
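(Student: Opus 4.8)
The plan is to prove the two implications separately, the forward one being a direct estimate and the converse requiring a compactness argument in the dual space.

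For the \emph{only if} direction, I would fix $x\in D(A)$ and $\lambda>0$; the inequality is trivial when $x=0$, so assume $x\neq 0$. By dissipativity choose $x^*\in F(x)$ with $\mathrm{Re}\langle Ax,x^*\rangle\leq 0$. Using $\langle x,x^*\rangle=\|x\|^2$ and $\|x^*\|=\|x\|$ together with the sign condition, I would estimate
$$\lambda\|x\|^2=\lambda\,\mathrm{Re}\langle x,x^*\rangle\leq\mathrm{Re}\langle(\lambda I-A)x,x^*\rangle\leq\|(\lambda I-A)x\|\,\|x^*\|=\|(\lambda I-A)x\|\,\|x\|,$$
and dividing by $\|x\|$ yields the claim. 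The only inputs are the Cauchy--Schwarz-type bound $|\langle y,x^*\rangle|\leq\|y\|\,\|x^*\|$ and $-\mathrm{Re}\langle Ax,x^*\rangle\geq 0$.

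For the \emph{if} direction, the idea is to manufacture, for a fixed $x\in D(A)$ with $x\neq 0$, a suitable element of $F(x)$ as a weak-$*$ limit of normalized norming functionals of the vectors $(\lambda I-A)x$. For each $\lambda>0$ I would pick, via Hahn--Banach, a functional $y_\lambda^*\in X^*$ with $\|y_\lambda^*\|=1$ and $\langle(\lambda I-A)x,y_\lambda^*\rangle=\|(\lambda I-A)x\|$. The hypothesis then gives the two-sided control
$$\lambda\|x\|\leq\|(\lambda I-A)x\|=\mathrm{Re}\langle(\lambda I-A)x,y_\lambda^*\rangle=\lambda\,\mathrm{Re}\langle x,y_\lambda^*\rangle-\mathrm{Re}\langle Ax,y_\lambda^*\rangle,$$
from which I extract both $\mathrm{Re}\langle Ax,y_\lambda^*\rangle\leq 0$ and $\mathrm{Re}\langle x,y_\lambda^*\rangle\geq\|x\|-\tfrac{1}{\lambda}\|Ax\|$.

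By the Banach--Alaoglu theorem the closed unit ball of $X^*$ is weak-$*$ compact, so as $\lambda\to\infty$ the net $(y_\lambda^*)$ has a weak-$*$ cluster point $x^*$ with $\|x^*\|\leq 1$. Passing to the limit along a subnet, the second inequality forces $\mathrm{Re}\langle x,x^*\rangle\geq\|x\|$, which combined with $\|x^*\|\leq 1$ pins down $\langle x,x^*\rangle=\|x\|$ and $\|x^*\|=1$, while the first inequality survives as $\mathrm{Re}\langle Ax,x^*\rangle\leq 0$. Rescaling by $\tilde{x}^*=\|x\|\,x^*$ then yields $\tilde{x}^*\in F(x)$ with $\mathrm{Re}\langle Ax,\tilde{x}^*\rangle\leq 0$, so $A$ is dissipative. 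The main obstacle is precisely this converse: one must produce a \emph{single} functional in $F(x)$ obeying the sign condition, and since the norming functionals $y_\lambda^*$ vary with $\lambda$, the delicate step is the weak-$*$ limiting argument and the verification that the cluster point lands exactly on the ``corner'' of the unit ball where $\|x^*\|=1$ and $\langle x,x^*\rangle=\|x\|$.
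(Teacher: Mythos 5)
Your proposal is correct and follows essentially the same route as the paper's source: the paper states this theorem without proof (citing Pazy and Engel--Nagel), and your argument --- the duality-set estimate for the forward direction, and norming functionals plus a weak-$*$ cluster point via Banach--Alaoglu for the converse --- is precisely the classical proof (Theorem 1.4.2 in Pazy's book). The only detail left implicit is the trivial case $x=0$ in the converse, where $F(0)=\{0\}$ makes the dissipativity condition automatic, so there is no gap.
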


\begin{thm}
For a dissipative operator $(A, D(A))$ the following properties hold.

i) $\lambda-A$ is injective for all $\lambda>0$ and
$$\|(\lambda-A)^{-1}z\|\leq\frac{1}{\lambda}\|z\|,$$
for all $z$ in the range $R(\lambda-A)=(\lambda-A)D(A)$.

ii) $\lambda-A$ is surjective for some $\lambda>0$ if and only if it is surjective for each $\lambda>0$. In that case, one has $(0, \infty)\subset\rho(A)$.

ii) $A$ is closed if and only if the range $R(\lambda-A)$ is closed for some (hence all) $\lambda>0$.

iv) If $R(A)\subseteq\overline{D(A)}$ then $A$ is closable. Its closure $\overline{A}$ is again dissipative and satisfies $R(\lambda-\overline{A})=\overline{R(\lambda-A)}$ for all $\lambda>0$.
\end{thm}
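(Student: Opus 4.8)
The plan is to derive all four parts from the resolvent characterization of dissipativity (Theorem 2.1), namely $\|(\lambda I-A)x\|\ge\lambda\|x\|$ for all $x\in D(A)$ and $\lambda>0$, which is the engine behind everything. For (i), injectivity is immediate: if $(\lambda-A)x=0$ then $\lambda\|x\|\le\|(\lambda-A)x\|=0$, so $x=0$. Writing $z=(\lambda-A)x$ with $x=(\lambda-A)^{-1}z$ for $z$ in the range and inserting this into the same inequality gives $\|z\|\ge\lambda\|(\lambda-A)^{-1}z\|$, which is the claimed bound.

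For (ii) I would run a connectedness argument on the set $S=\{\lambda>0:\ \lambda-A\text{ is surjective}\}$. First note that by (i) surjectivity of $\lambda-A$ forces it to be a bijection with bounded inverse of norm $\le 1/\lambda$, hence $\lambda\in\rho(A)$; in particular a nonempty $S$ already makes $A$ closed (an operator with nonempty resolvent set is closed) and $S=\rho(A)\cap(0,\infty)$. Openness of $S$ is then openness of $\rho(A)$ (concretely, the Neumann-series estimate that the ball $|\mu-\lambda_0|<\lambda_0$ lies in $\rho(A)$ whenever $\lambda_0\in S$). For closedness of $S$ in $(0,\infty)$, I would take $\lambda_n\in S$ with $\lambda_n\to\lambda>0$ and, for fixed $y$, set $x_n=R(\lambda_n,A)y$; the resolvent identity together with the uniform bound $\|R(\lambda_n,A)\|\le 1/\lambda_n$ shows $(x_n)$ is Cauchy, and closedness of $A$ lets me pass to the limit in $(\lambda_n-A)x_n=y$ to conclude $\lambda-A$ is surjective, i.e. $\lambda\in S$. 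Since $(0,\infty)$ is connected and $S$ is nonempty, open and closed, $S=(0,\infty)$, giving $(0,\infty)\subseteq\rho(A)$.

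For (iii) both implications rest on the contraction bound from (i). If $A$ is closed and $z_n=(\lambda-A)x_n\to z$, then $\|x_n-x_m\|\le\lambda^{-1}\|z_n-z_m\|$ shows $x_n\to x$, and $Ax_n=\lambda x_n-z_n\to\lambda x-z$ forces $x\in D(A)$ and $z=(\lambda-A)x$, so $R(\lambda-A)$ is closed. Conversely, if $R(\lambda-A)$ is closed and $x_n\to x$, $Ax_n\to y$, then $(\lambda-A)x_n\to\lambda x-y=:z\in R(\lambda-A)$; writing $z=(\lambda-A)w$ and using (i) again gives $x_n\to w$, whence $x=w\in D(A)$ and $Ax=y$, so $A$ is closed. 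The phrase ``some (hence all)'' is routed through closedness of $A$, which does not depend on the particular $\lambda$.

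For (iv) the dissipativity of $\overline{A}$ and the range identity are soft. Passing to limits in $\|(\lambda-A)x_n\|\ge\lambda\|x_n\|$ along an approximating sequence for $x\in D(\overline{A})$ yields $\|(\lambda-\overline{A})x\|\ge\lambda\|x\|$, so $\overline{A}$ is dissipative by Theorem 2.1; the inclusion $R(\lambda-\overline{A})\subseteq\overline{R(\lambda-A)}$ is immediate from the definition of $\overline{A}$, while $\supseteq$ uses the Cauchy argument of (iii) to produce a limit in $D(\overline{A})$. The genuine obstacle is closability itself. My plan is the classical duality argument: given $x_n\to 0$ and $Ax_n\to y$, apply the dissipativity condition $\mathrm{Re}\langle A(w+\alpha x_n),z_n^*\rangle\le 0$ at $w+\alpha x_n$ for fixed $w\in D(A)$ and $\alpha>0$, extract a weak-$*$ limit $z^*\in F(w)$ of the normalized functionals (Banach--Alaoglu), and let $\alpha\to\infty$ to obtain some $z^*\in F(w)$ with $\mathrm{Re}\langle y,z^*\rangle\le 0$ for every $w\in D(A)$. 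The hypothesis $R(A)\subseteq\overline{D(A)}$ enters exactly here: since $y\in\overline{R(A)}\subseteq\overline{D(A)}$, I may choose $w_k\in D(A)$ with $w_k\to y$, and then $\|w_k\|^2=\mathrm{Re}\langle y,z_k^*\rangle-\mathrm{Re}\langle y-w_k,z_k^*\rangle\le\|y-w_k\|\,\|w_k\|$ forces $w_k\to 0$, hence $y=0$. The delicate points I expect to handle carefully are the weak-$*$ compactness/subnet bookkeeping and the verification that the limiting functional really lies in $F(w)$.
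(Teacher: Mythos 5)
Your proposal is correct, but note that the paper itself gives no proof of this statement: it is quoted as a preliminary with a pointer to \cite{A.Pazy}, \cite{Engel}, so the natural benchmark is the proof in those references (the dissipative-operator proposition in Engel--Nagel). Your parts (i)--(iii) are essentially that standard argument: injectivity and the bound $\|(\lambda-A)^{-1}z\|\le\lambda^{-1}\|z\|$ straight from $\|(\lambda-A)x\|\ge\lambda\|x\|$; the observation that one surjective $\lambda_0$ puts $\lambda_0$ in $\rho(A)$ and makes $A$ closed, followed by an open-and-closed argument on $S=\{\lambda>0:\ \lambda-A\ \text{surjective}\}$ (your connectedness bookkeeping is a cosmetic variant of the reference's iterated enlargement $(0,2\lambda_0)\subset\rho(A)$, but both rest on the same Neumann-series radius $\ge\lambda_0$ and the same resolvent-identity Cauchy estimate); and the two contraction-bound implications in (iii). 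Where you genuinely diverge is closability in (iv). The reference proof never leaves the norm inequality: for $x_n\to0$, $Ax_n\to y$, fixed $w\in D(A)$ and $\lambda>0$, apply Theorem 2.1 to $\lambda x_n+w$ and let $n\to\infty$ to get $\|\lambda w-\lambda y-Aw\|\ge\lambda\|w\|$; dividing by $\lambda$ and letting $\lambda\to\infty$ yields $\|w-y\|\ge\|w\|$ for every $w\in D(A)$, and then $R(A)\subseteq\overline{D(A)}$ lets one send $w\to y$ and conclude $y=0$. You instead work with duality sets: support functionals at $w+\alpha x_n$, weak-$*$ cluster points via Banach--Alaoglu, verification that the limit lies in $F(w)$, then $\alpha\to\infty$ to produce $z^*\in F(w)$ with $\mathrm{Re}\langle y,z^*\rangle\le0$, and finally your estimate $\|w_k\|^2\le\|y-w_k\|\,\|w_k\|$ along $w_k\to y$. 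This is correct --- the pairing limits go through because you have norm convergence in the first slot against bounded weak-$*$ convergence in the second, and weak-$*$ lower semicontinuity of the norm places the cluster point in $F(w)$ --- but it buys nothing here and costs the compactness/subnet machinery; your $\alpha\to\infty$ is playing exactly the role of the reference's $\lambda\to\infty$, one layer of duality higher. The remaining pieces of your (iv) (dissipativity of $\overline{A}$ by passing to the limit in the inequality, and $R(\lambda-\overline{A})=\overline{R(\lambda-A)}$ via the Cauchy argument of (iii)) match the standard proof.
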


The following theorem can be found in \cite{Engel}.
\begin{thm}
A pair $(A_1, A_2)$ of operators with domain in $X$ is infinitesimal generator of $C_0$--two--parameter semigroup $T(s,t)$
satisfying $\|T(s,t)\|\leq M_0e^{\omega s+\omega' t},$ for some $M_0\geq1, \omega, \omega'>0,$ if and only if

(i) $A_1$ and $A_2$ are closed and densely defined operators and
$$R(\lambda', A_2)R(\lambda, A_1)=R(\lambda, A_1)R(\lambda', A_2),$$
for each $\lambda\geq\omega, \lambda'\geq\omega'.$

(ii) The resolvent sets $\rho(A_1)$ and $\rho(A_2)$ contain $[\omega, \infty)$ and $[\omega', \infty)$,
respectively and there is some $M\geq1$ such that,
$$\|R(\lambda, A_1)^n\|\leq\frac{M}{(Re\lambda-\omega)^n},$$
$$\|R(\lambda', A_2)^n\|\leq\frac{M}{(Re\lambda'-\omega')^n},$$
where $Re\lambda\geq\omega$ and $Re\lambda'\geq\omega'.$

\end{thm}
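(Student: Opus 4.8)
The plan is to reduce the two--parameter statement to two separate applications of the classical Hille--Yosida generation theorem, linked by a commutativity argument. The starting point is the factorization recorded in the preliminaries: any two--parameter $C_0$--semigroup satisfies $T(s,t)=u(s)v(t)$, where $u(s)=T(s,0)$ and $v(t)=T(0,t)$ are ordinary one--parameter $C_0$--semigroups whose generators are, by definition, $A_1$ and $A_2$. Conditions (i) and (ii) are arranged so that (ii), read separately for $A_1$ and for $A_2$, is exactly the Hille--Yosida hypothesis for $u(s)$ and $v(t)$, while the resolvent identity in (i) encodes the commutativity that makes the product $u(s)v(t)$ a genuine two--parameter semigroup.

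For necessity, suppose $(A_1,A_2)$ generates $T(s,t)$ with the stated bound. First I would observe that $u(s)$ and $v(t)$ inherit bounds $\|u(s)\|\le Me^{\omega s}$ and $\|v(t)\|\le Me^{\omega' t}$ from $T$, so the classical Hille--Yosida theorem applied to each one--parameter semigroup yields that $A_1,A_2$ are closed and densely defined, that $\rho(A_1)\supseteq[\omega,\infty)$ and $\rho(A_2)\supseteq[\omega',\infty)$, and that the iterated resolvent bounds in (ii) hold. The commutativity in (i) follows because $u(s)v(t)=T(s,t)=v(t)u(s)$ for all $s,t\ge0$ --- both products equal $T(s,t)$ by the semigroup law --- so representing the resolvents as Laplace transforms $R(\lambda,A_1)=\int_0^\infty e^{-\lambda s}u(s)\,ds$ and $R(\lambda',A_2)=\int_0^\infty e^{-\lambda' t}v(t)\,dt$ and using the commutativity of the integrands gives the resolvent identity.

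For sufficiency, given (i) and (ii), I would apply Hille--Yosida to $A_1$ and to $A_2$ individually to produce $C_0$--semigroups $u(s)$ and $v(t)$ with generators $A_1,A_2$ and the respective exponential bounds. The crux --- and what I expect to be the main obstacle --- is to upgrade the commutativity of the resolvents in (i) to commutativity of the two semigroups. Here I would use the Yosida approximations $A_i^{(\lambda)}=\lambda A_iR(\lambda,A_i)$, which are bounded operators built from the resolvents; the hypothesis that $R(\lambda',A_2)$ and $R(\lambda,A_1)$ commute forces $A_1^{(\lambda)}$ and $A_2^{(\lambda')}$ to commute, hence their bounded exponentials $e^{sA_1^{(\lambda)}}$ and $e^{tA_2^{(\lambda')}}$ commute. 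Passing to the strong limit as $\lambda,\lambda'\to\infty$, where $e^{sA_1^{(\lambda)}}\to u(s)$ and $e^{tA_2^{(\lambda')}}\to v(t)$ strongly, yields $u(s)v(t)=v(t)u(s)$.

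Once commutativity is in hand, I would define $T(s,t)=u(s)v(t)$ and verify the two--parameter axioms directly: $T(0,0)=u(0)v(0)=I$, and the composition law $T(s+s',t+t')=T(s,t)T(s',t')$ follows from the one--parameter laws together with the commutation $v(t)u(s')=u(s')v(t)$. Strong continuity of $(s,t)\mapsto T(s,t)x$ follows from that of $u$ and $v$, the bound $\|T(s,t)\|\le M^2e^{\omega s+\omega' t}$ is immediate, and the generator pair is $(A_1,A_2)$ by construction since $T(s,0)=u(s)$ and $T(0,t)=v(t)$. The only genuinely delicate point is the limiting commutativity argument above; everything else is bookkeeping that transfers known one--parameter facts through the product structure.
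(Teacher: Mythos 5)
The first thing to say is that the paper contains no proof of this theorem at all: it is quoted as a known result with the remark ``The following theorem can be found in \cite{Engel},'' so there is no argument of the authors' to compare yours against. Your proposal is, in effect, the proof the paper leaves implicit, and its architecture is the natural (and correct) one: decompose $T(s,t)=u(s)v(t)$ with $u(s)=T(s,0)$, $v(t)=T(0,t)$, whose generators are by definition $A_1$ and $A_2$; for necessity, apply the one--parameter Feller--Miyadera--Phillips (Hille--Yosida) theorem to each factor to get closedness, dense domains and the iterated resolvent bounds, and transfer the commutation $u(s)v(t)=T(s,t)=v(t)u(s)$ to the resolvents through the Laplace-transform representation $R(\lambda,A_1)=\int_0^\infty e^{-\lambda s}u(s)\,ds$; for sufficiency, generate $u$ and $v$ separately, upgrade resolvent commutativity to semigroup commutativity, and check the two--parameter axioms for $u(s)v(t)$. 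This matches the framework the paper sets up in its preliminaries (where $T\mapsto(u,v)$ and the definition of the generator pair are recorded), so it is the proof one would want here.

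Two points in your argument need more care than you give them. First, the strong convergence $e^{sA_1^{(\lambda)}}\to u(s)$ of the Yosida approximants is usually proved in textbooks for \emph{contraction} semigroups, whereas this theorem is in the general regime $M\geq 1$, $\omega,\omega'>0$. You must either redo the estimates directly (one gets $\|e^{sA_\lambda}\|\leq Me^{\lambda\omega s/(\lambda-\omega)}$, and the standard identity $e^{sA_\lambda}x-e^{sA_\mu}x=\int_0^s e^{(s-r)A_\mu}e^{rA_\lambda}(A_\lambda-A_\mu)x\,dr$ still works because $A_\lambda$ and $A_\mu$ commute), or reduce to the contractive case by renorming; in the latter case note that the renorming adapted to $A_1$ and the one adapted to $A_2$ need not coincide, which is harmless only because strong convergence is independent of the choice of equivalent norm --- this deserves an explicit sentence. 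Second, the theorem as stated asks for the \emph{closed} half-lines $[\omega,\infty)\subset\rho(A_1)$, $[\omega',\infty)\subset\rho(A_2)$ and bounds at $\mathrm{Re}\,\lambda=\omega$; the one--parameter generation theorem yields only the open half-line $(\omega,\infty)$, and the closed version is actually false in general (take $A_1=\omega I$, which generates $e^{\omega s}I$ yet has $\omega\in\sigma(A_1)$). This defect sits in the paper's formulation and your proof inherits it; with $[\omega,\infty)$ read as $(\omega,\infty)$ and $\mathrm{Re}\,\lambda\geq\omega$ as $\mathrm{Re}\,\lambda>\omega$, your argument goes through.
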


Now we state extended Lumer--Phillips theorem as follows.

\begin{thm}
Let $A_1$ and $A_2$ are linear operators with dense domain in $X$.

(a) If $A_1$ and $A_2$ are dissipative and $A_2$ be bounded and there exist $\lambda_1, \lambda_2>0$ such that $R(\lambda_1I-A_1)=R(\lambda_2I-A_2)=X$ and $A_1A_2=A_2A_1$ then $(A_1, A_2)$ is the infinitesimal generator of a $C_0$--semigroup of contractions on $X$.

(b) If $(A_1, A_2)$ is the infinitesimal generator of $C_0$--semigroup of contractions on $X$, then
$$R(\lambda I-A_1)=R(\lambda I-A_2)=X, \ for \ all \ \lambda>0,$$
$A_1$ and $A_2$ are dissipative. Moreover for every $x\in D(A_1), y\in D(A_2), x^*\in F(x)$ and $y^*\in F(y)$, we have
$$Re<A_1x,x^*>\leq0,$$
and
$$Re<A_2y,y^*>\leq0.$$
\end{thm}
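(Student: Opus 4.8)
The plan is to reduce the two–parameter assertion to two applications of the classical one–parameter Lumer–Phillips theorem, exploiting the factorization $T(s,t)=u(s)v(t)$ with $u(s)=T(s,0)$ and $v(t)=T(0,t)$. For part (a), I would first treat $A_1$ and $A_2$ separately. Each $A_i$ is densely defined, dissipative, and satisfies $R(\lambda_i I-A_i)=X$ for some $\lambda_i>0$; by Theorem 2.2 this forces $(0,\infty)\subset\rho(A_i)$ with $\|R(\lambda,A_i)\|\le 1/\lambda$, together with closedness of $A_i$ (its range being all of $X$, hence closed). These are exactly the hypotheses of the one–parameter Lumer–Phillips theorem, so $A_1$ generates a $C_0$–semigroup of contractions $u(s)$ and $A_2$ generates one, $v(t)$, with $\|u(s)\|\le 1$ and $\|v(t)\|\le 1$.

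The hard part will be to upgrade the algebraic identity $A_1A_2=A_2A_1$ to commutativity $u(s)v(t)=v(t)u(s)$ of the two semigroups, which is precisely what makes $T(s,t):=u(s)v(t)$ obey the semigroup law
$$T(s+s',t+t')=u(s)u(s')v(t)v(t')=u(s)v(t)u(s')v(t')=T(s,t)T(s',t').$$
Here the boundedness of $A_2$ is essential. Since $A_2$ is bounded and $A_1A_2=A_2A_1$ means $A_2D(A_1)\subseteq D(A_1)$ with $A_1A_2x=A_2A_1x$ on $D(A_1)$, a short computation gives that $A_2$ commutes with the resolvent of $A_1$: for $y\in X$ and $x=R(\lambda,A_1)y$ one has $(\lambda-A_1)A_2x=A_2(\lambda-A_1)x=A_2y$, so $A_2R(\lambda,A_1)=R(\lambda,A_1)A_2$. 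Inserting this into the exponential formula $u(s)x=\lim_{n}\bigl(\tfrac{n}{s}R(\tfrac{n}{s},A_1)\bigr)^n x$ yields $A_2u(s)=u(s)A_2$, and because $v(t)=e^{tA_2}$ is a norm–limit of polynomials in $A_2$, it follows that $u(s)v(t)=v(t)u(s)$.

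With commutativity secured, I would verify that $T(s,t)=u(s)v(t)$ is a $C_0$–semigroup of contractions: $T(0,0)=I$, the semigroup law holds by the display above, strong continuity is inherited from $u$ and $v$, and $\|T(s,t)\|\le\|u(s)\|\,\|v(t)\|\le 1$. Since $T(s,0)=u(s)$ and $T(0,t)=v(t)$ have generators $A_1$ and $A_2$, the pair $(A_1,A_2)$ is, by the definition of the generator of a two–parameter semigroup, the infinitesimal generator of $T$, completing part (a).

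For part (b) I would run the same bridge in reverse. From $\|T(s,t)\|\le 1$ we read off $\|u(s)\|=\|T(s,0)\|\le 1$ and $\|v(t)\|=\|T(0,t)\|\le 1$, so $u$ and $v$ are one–parameter $C_0$–semigroups of contractions generated by $A_1$ and $A_2$. The converse half of the one–parameter Lumer–Phillips theorem then gives $(0,\infty)\subset\rho(A_i)$, hence $R(\lambda I-A_i)=X$ for all $\lambda>0$, together with dissipativity of $A_1$ and $A_2$. Finally, the pointwise bounds follow from the standard difference–quotient estimate: for $x\in D(A_1)$ and any $x^*\in F(x)$,
$$\mathrm{Re}\langle A_1x,x^*\rangle=\lim_{h\to0^+}\frac{\mathrm{Re}\langle u(h)x,x^*\rangle-\|x\|^2}{h}\le 0,$$
since $\mathrm{Re}\langle u(h)x,x^*\rangle\le\|u(h)x\|\,\|x^*\|\le\|x\|^2$; the identical computation with $v$ in place of $u$ gives $\mathrm{Re}\langle A_2y,y^*\rangle\le 0$ for every $y\in D(A_2)$ and $y^*\in F(y)$.
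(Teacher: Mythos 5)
Your proof is correct, but for part (a) it takes a genuinely different route from the paper. The paper never builds the semigroup by hand: it verifies the hypotheses of the two--parameter generation theorem (Theorem 2.3) --- the resolvent bounds $\|R(\lambda,A_i)\|\le\lambda^{-1}$ coming from dissipativity plus surjectivity, and the commutation of the resolvents $R(\lambda',A_2)R(\lambda,A_1)=R(\lambda,A_1)R(\lambda',A_2)$, which it derives from $A_1A_2=A_2A_1$ and boundedness of $A_2$ via surjectivity of $(\lambda'I-A_2)(\lambda I-A_1)$ --- and then invokes Theorem 2.3 as a black box. You instead apply the one--parameter Lumer--Phillips theorem twice to get contraction semigroups $u(s)$ and $v(t)$, upgrade the resolvent commutation $A_2R(\lambda,A_1)=R(\lambda,A_1)A_2$ to semigroup commutation $u(s)v(t)=v(t)u(s)$ via the exponential formula and the power series for $e^{tA_2}$, and then construct $T(s,t)=u(s)v(t)$ explicitly. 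Both arguments use the boundedness of $A_2$ at the same pressure point (commutation), but yours is more self-contained: it needs only classical one--parameter results, and it sidesteps a small awkwardness in the paper, namely that Theorem 2.3 is stated for $M_0\ge1$ and $\omega,\omega'>0$, so extracting a semigroup of \emph{contractions} ($M_0=1$, $\omega=\omega'=0$) from it is strictly a boundary case, whereas your construction yields $\|T(s,t)\|\le\|u(s)\|\,\|v(t)\|\le1$ directly. The price you pay is having to justify the exponential formula and the joint strong continuity and semigroup law for $u(s)v(t)$, steps the paper's citation of Theorem 2.3 subsumes. For part (b) the two proofs coincide in substance: surjectivity of $\lambda I-A_i$ follows from the relevant generation theorem (two--parameter in the paper, one--parameter in yours), and both use the identical difference--quotient argument, $\mathrm{Re}\langle T(h,0)x-x,x^*\rangle\le0$ divided by $h$ as $h\to0^+$, valid for \emph{every} $x^*\in F(x)$.
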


\begin{proof}

(a) Since $R(\lambda_1I-A_1)=R(\lambda_2I-A_2)=X,$ then by theorem 2.2, $R(\lambda I-A_1)=R(\lambda I-A_2)=X$ for every $\lambda>0$.
 Therefore  $\rho(A_1)\supset [0, \infty), \rho(A_2)\supset [0, \infty)$ and by theorem 2.1 we have,
  $\|R(\lambda I, A_1)\|\leq\lambda^{-1}$ and $\|R(\lambda I, A_2)\|\leq\lambda^{-1}.$

  On the other hand, let $\lambda, \lambda'>0$. Hence by theorem 2.1 we have that $(\lambda I-A_1)^{-1}$ and $(\lambda' I-A_2)^{-1}$
  exist. By the assumption $A_1A_2=A_2A_1$ hence
  \begin{eqnarray}\label{eq1}
  (\lambda I-A_1)(\lambda'I-A_2)=(\lambda' I-A_2)(\lambda I-A_1),
  \end{eqnarray}
  Also $(\lambda I-A_1)D(A_2)=X$. Since $A_2$ is bounded therefore
  $$(\lambda' I-A_2)(\lambda I-A_1)D(A_1)=(\lambda' I-A_2)X=X.$$

  Now let $y\in X$, so there is some $x\in D(A_1)$ such that
  \begin{eqnarray*}
\begin{split}
y&=(\lambda'I-A_2)(\lambda I-A_1)x\\
&=(\lambda I-A_1)(\lambda' I-A_2)x,
\end{split}
\end{eqnarray*}
last equality holds from (\ref{eq1}).
Therefore we have
$$R(\lambda', A_2)R(\lambda, A_1)y=x=R(\lambda, A_1)R(\lambda', A_2)y,$$
and also,
$$R(\lambda', A_2)R(\lambda, A_1)=R(\lambda, A_1)R(\lambda', A_2).$$
By theorem 2.3, we conclude that $(A_1, A_2)$ is the infinitesimal generator of a $C_0$--two--parameter semigroup of
contractions on $X$.

(b) If $(A_1,A_2)$ is the infinitesimal generator of a $C_0$--two--parameter semigroup $\{W(s,t)\}$ of contractions on $X$.
Then by theorem 2.3 part (ii), $[0,\infty)$ is contained in $\rho(A_1)$ and $\rho(A_2)$, therefore
$$R(\lambda I-A_1)=R(\lambda I-A_2)=X, \ \ \ for\  all \ \  \lambda>0.$$

For prove the dissipatedness of $A_1$ and $A_2$, following the proof of theorem 4.3 in \cite{A.Pazy} for the case of one--parameter, let $x\in D(A_1), \ y\in D(A_2), \ x^*\in F(x)$ and $y^*\in F(y)$.
Hence
$$|<W(s,0)x,x^*>|\leq\|W(s,0)x\|\|x^*\|\leq\|x\|^2,$$
$$|<W(0,t)y,y^*>|\leq\|W(0,t)y\|\|y^*\|\leq\|y\|^2,$$
and therefore
$$Re<W(s,0)x-y,x^*>=Re<W(s,0)x,x^*>-\|x\|^2\leq0,$$
$$Re<W(0,t)y-y,y^*>=Re<W(0,t)y,y^*>-\|y\|^2\leq0.$$
Dividing above states to $s$ and $t$ respectively and letting $s$ and $t$ to zero, yield
$$Re<A_1x,x^*>\leq0,$$
and
$$Re<A_2y,y^*>\leq0.$$

These hold for every $x^*\in F(x)$ and $y^*\in F(y)$ and complete the proof.

\end{proof}

The following example shows that there is a Banach space and operators satisfying conditions in theorem 2.4.
\begin{exam}
Suppose $X$ be a set of functions on $\mathbb{R}^2$ as below;
$$X=span\{e^{\alpha x+\beta y} : \ -\infty<\alpha, \beta<\infty\},$$
and for every $s,t\geq0,$ define $T(s,t)$ on $X$ by,
$$(T(s,t)f)(x,y)=f(x+s,y+t),$$
which $f\in X.$

Then $\{T(s,t)\}$ is a $C_0$--semigroup of contractions on $X$. Its infinitesimal generator $(A_1, A_2)$ has the domains
$D(A_1)$ and $D(A_2)$ respectively which,
$$D(A_1)=\{f : \ f\in X , \ f_x \ exists\  and \ f_x \in X\},$$
$$D(A_2)=\{f : \ f\in X , \ f_y \ exists\  and \ f_y \in X\}.$$
and on $D(A_1)$ and $D(A_2)$,
$$A_1f=f_x \ \ \ \ ,\ \ \ \ A_2f=f_y,$$
such that $f_x$ and $f_y$ are derivatives on $x$ and $y$, respectively.

Hence $A_1$ and $A_2$ have the property such that $A_1A_2=A_2A_1$ on X.
\end{exam}

\begin{rem}
A dissipative operator $A$ for which $R(I-A)=X,$ is called m--dissipative.
If $A$ is dissipative so is $\mu A$ for all $\mu >0$ and therefore if $A$ is $m$--dissipative
then $R(\lambda I-A)=X$ for every $\lambda>0.$
In terms of $m$--dissipative operators the theorem 2.4 can be restated as:
A pair of densely defined operators $(A_1, A_2)$ is the infinitesimal generator of a
two--parameter $C_0$--semigroup of contractions if and only if these are $m$--dissipative with the property $A_1A_2=A_2A_1$.
\end{rem}


\vspace{8mm}

{\noindent Rasoul Abazari$^*$, Assadollah Niknam, Mahmoud Hassani }\\
{\noindent\small { Department of
Mathematics, Faculty of Sciences, Mashhad Branch, Islamic Azad University, P.O.Box 413-91735, Mashhad, Iran.}}\\
{\noindent\small $^*$Corresponding E-mail: {r.abazari@iauardabil.ac.ir, rasoolabazari@gmail.com }}
}}

\end{document}